\title{
Short-time stability of scalar viscous shocks in the inviscid limit by the relative entropy method}
\author{
Kyudong Choi\thanks{kchoi@math.wisc.edu, University of Wisconsin-Madison},
Alexis F. Vasseur\thanks{vasseur@math.utexas.edu, University of Texas at Austin}
}
\date{\today }
\chardef\bslash=`\\ 
\newtheorem{thm}{Theorem}[section]
\newtheorem{lem}[thm]{Lemma}
\newtheorem{prop}[thm]{Proposition}
\newtheorem*{thm*}{Theorem}
\newtheorem*{lem*}{Lemma}
\newtheorem*{prop*}{Proposition}
\theoremstyle{definition}
\newtheorem*{ack}{Acknowledgment:}
\theoremstyle{remark}
\newtheorem{rem}{Remark}[section]
\newcommand{\R}{\mathbb{R}}
\newcommand{\eps}{\varepsilon}
\newcommand{\dis}{\displaystyle}
\newcommand{\C}{C^*}
\newcommand{\He}{H}
\def\cprime{$'$}
\newcommand{\eval}[2][\right]{\relax
  \ifx#1\right\relax \left.\fi#2#1\rvert}
\begin{document}

\maketitle
\renewcommand{\sectionmark}[1]{}
\begin{abstract}
We consider  inviscid limits to  shocks  
for   viscous scalar conservation laws 
in one space dimension, with  strict convex fluxes. We show that we 
can obtain sharp estimates in $L^2$
for a class of large perturbations
and for any bounded time interval. 
Those perturbations can be chosen big enough to destroy the viscous layer. 
This shows that the fast convergence to the shock does not depend on the 
fine structure of the viscous layers. This is the first application of the relative
 entropy method developed in 
\cite{nick}, \cite{nick_vasseur} to the study of an  
inviscid
limit to a shock. \\

\end{abstract}
\noindent Keywords: viscous scalar conservation laws; inviscid limits; stability; relative
entropy method; shocks.\\

\noindent AMS Subject Classification: 35B40, 35L65, 35L67.\\

\section{Introduction and the main result}\label{sec_introduction}

For any strictly convex flux function $A\in C^2(\R)$, we consider the family of 
 viscous scalar conservation laws in one space dimension:
\begin{equation}\label{scalar_cl}\begin{cases}
&\partial_tU+\partial_xA(U)=\eps\partial_{xx}^2U\quad \mbox{ for } t>0,x\in\mathbb{R},\\
 &U(0,x)= U_0(x)  \mbox{ for } x\in\mathbb{R},
\end{cases}
\end{equation}   
for any $\eps>0$ and  $U_0\in L^\infty$. 
Global unique solutions to (\ref{scalar_cl}) have been constructed by 
Hopf \cite{MR0047234}  and 
 Ole{\u\i}nik \cite{MR0094541}. The inviscid case, $\eps=0$, is covered by the theory of 
Kru{\v{z}}kov \cite{Kruzkov}. Kuznetsov showed in \cite{Kuznetsov} that, for fixed initial
 data $U_0$, the solutions $U^{\eps}$ of (\ref{scalar_cl})
 converge in $L^1$, when $\eps$ goes to zero,  to the solution $U^0$ of 
 the inviscid scalar conservation law (the equation  (\ref{scalar_cl}) with $\eps=0$) with the rate $\sqrt{\eps}$:
 $$\|U^\eps(t)-U^0(t)\|_{L^1}\leq C\sqrt{\eps t}\,\mbox{TV}\,({U_0})$$ (for the proof, e.g. see either Serre \cite{Serre}
 or Perthame \cite{Perthame}).\\
 

 In this paper we consider the 
inviscid 
 limit for general initial values and for any bounded time interval. 
We are particularly interested in the cases where the initial values carry 
too much entropy  for the structure of the layer to be preserved 
in its vanishing viscosity limit.
The shocks solutions of the inviscid case ($\eps=0$) can be described as follows.  
Consider two constants $C_L>C_R$, and the associated function defined by 
 \begin{equation}\label{def_S}
 S_0(x)=\begin{cases}
&C_L \mbox{ if } x<0,\\
&C_R \mbox{ if } x\geq 0.
\end{cases}
\end{equation}
Then, the Rankine-Hugoniot conditions ensures that the function
\begin{equation}\label{def_sigma}
S_0(x-\sigma t) \qquad \mbox{with }\quad\sigma:=\frac{A(C_L)-A(C_R)}{C_L-C_R},
\end{equation}
is a solution to the inviscid equation (\ref{scalar_cl}) with $\eps=0$. 
The condition $C_L>C_R$ implies that they verify the entropy conditions, that is:
$$
\partial_t \eta(U)+\partial_x G(U)\leq 0,\qquad t>0, \ x\in \R,
$$
for any convex functions $\eta$, and 
$G'=\eta'A'.$\\

Our main result is the following.
\begin{thm}\label{main_thmL2} 
Let $C_L>C_R$  and $U_0\in L^\infty(\R)\cap BV_{loc}(\mathbb{R}) $ be such that 
$$
(U_0-S_0)\in L^2(\R)\qquad\mathrm{ and}\qquad  
  (\frac{d}{dx}U_0)_+\in L^2(\R).
 $$ 
Then, there exists $\eps_0>0$ such that for any $T>0$, we have a constant $\C>0$ with the following: 
 \begin{itemize}
 \item[I.]
For any  $U$ solution to  
 \eqref{scalar_cl} with $0<\eps\leq \eps_0$, 
there exists a curve $X\in L^\infty(0,T)$ such that $X(0)=0$ and  for any $0<t<T$:
\begin{equation}\begin{split}\label{main_estiamteL2}
 &\|U(t)-S(t)\|^2_{L^2(\R)}\leq
\|U_0-S_0\|^2_{L^2(\R)}
 +\C  \eps\log({1}/{\eps}),
\end{split}\end{equation}
 where $S(t,x):=S_0(x-X(t))$, and $S_0$ is defined by (\ref{def_S}).

\item[II.] Moreover, this curve satisfies
\begin{equation}\label{estimate_curve_lishitzL2}
|\dot{X}(t)|\leq \C \quad\mbox{ and}
\end{equation}
\begin{equation}\label{estimate_curve_w.r.t._L^2_differL2}
|X(t)-\sigma t|^2\leq \C t^{2/3}
\left(\|U_0-S_0\|^2_{L^2(\mathbb{R})}
+\eps\log({1}/{\eps})\right).
 \end{equation}
 
\item[III.] The constant $\eps_0$ depends only on 
$\|(\frac{d}{dx}U_0)_+\|_{L^2}$, $C_L$, $C_R$, $\|U_0\|_{L^\infty}$ and
the flux function $A$, while $\C$ depends only on the same set as well as T.
  \end{itemize}
 \end{thm}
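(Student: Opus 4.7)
The plan is to apply the relative entropy method with a dynamic shift, as introduced in \cite{nick} and \cite{nick_vasseur}, and adapt it to the viscous setting. Set $\eta(u) := u^2/2$ and denote by $\eta(a|b) := \tfrac12(a-b)^2$ the associated quadratic relative entropy. The central object is the functional
$$E(t) := \int_{\R} \eta\bigl(U(t,x)\,\big|\,S(t,x)\bigr)\,dx, \qquad S(t,x) := S_0(x - X(t)),$$
so that $2E(t) = \|U(t)-S(t)\|_{L^2}^2$. The strategy is to define $X(t)$ through an explicit ODE driven by the traces of $U$ at $X(t)$, and then to establish a differential inequality of the form $\tfrac{d}{dt}E(t) \leq C^\ast \eps\log(1/\eps)$; integrating over $[0,t]$ yields \eqref{main_estiamteL2}.

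The first step is the relative-entropy computation. Differentiating $E$ in time, using \eqref{scalar_cl} and the fact that $S_0$ is piecewise constant (so all the spatial derivatives of $S$ are concentrated at $x=X(t)$), I decompose $\tfrac{d}{dt}E$ into (a) a hyperbolic bulk term expressed through the relative entropy flux $G(U|S)$, (b) a jump contribution at $x=X(t)$ of the form $[\,\dot X\,\eta(U|S) - G(U|S)\,]_{x=X(t)^-}^{x=X(t)^+}$, and (c) a viscous contribution consisting of the favorable dissipation $-\eps\int(\partial_x U)^2\,dx$ together with a boundary-type commutator at $x=X(t)$ coming from $\eps\partial_{xx}U$ tested against the jump in $S$. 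The strict convexity of $A$ (which underlies the Leger--Vasseur trick for scalar laws) ensures that (a) has the correct sign. The shift is then defined by an ODE
$$\dot X(t) \;=\; F\bigl(U(t,X(t)+),\,U(t,X(t)-)\bigr),$$
with $F$ chosen so that the jump expression (b) is pointwise non-positive; well-posedness of this ODE is a consequence of the $BV_{\text{loc}}$ regularity of $U$ (propagated from $U_0$), and the explicit form of $F$ together with the $L^\infty$ bound on $U$ delivers \eqref{estimate_curve_lishitzL2}.

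The main obstacle is the viscous commutator in (c), which is where the assumption $(\tfrac{d}{dx}U_0)_+\in L^2(\R)$ enters and where the $\eps\log(1/\eps)$ correction arises. The commutator is essentially $\eps\,[\,U-S\,]_{x=X(t)}\cdot\partial_x U(t,X(t))$ plus a smooth remainder, i.e.~an $O(\eps)$ expression weighted by $\partial_x U$ near the shift. To control it uniformly in $\eps$, I would exploit the Oleinik-type one-sided estimate for the viscous convex scalar equation: differentiating \eqref{scalar_cl} in $x$ gives an equation for $\partial_x U$ whose nonlinearity $A''(U)(\partial_x U)^2$ has a definite sign, so that $(\partial_x U)_+ \in L^2$ is propagated, and a uniform-in-$\eps$ bound on $(\partial_x U)_+$ is available for positive times. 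Splitting the integration into a layer $|x-X(t)|\le \ell$ and its complement, bounding $\partial_x U$ pointwise on the far region with the Oleinik bound and using the viscous $L^2$ dissipation on the layer, and finally optimizing $\ell\sim\eps$, produces the logarithmic factor. This quantitative step is the technical heart of the argument.

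Finally, \eqref{estimate_curve_w.r.t._L^2_differL2} is obtained by an independent, elementary argument. Testing \eqref{scalar_cl} against a smooth cut-off $\varphi_\ell$ supported in a window of length $\ell$ centered near $x=\sigma t$, using the Rankine--Hugoniot identity $\sigma(C_L-C_R)=A(C_L)-A(C_R)$, and comparing the resulting mass defect to $\int \varphi_\ell (S(t)-S_0)\,dx$, one relates $|X(t)-\sigma t|\,|C_L-C_R|$ to $\sqrt{\ell}\,\|U(t)-S(t)\|_{L^2}$ (via Cauchy--Schwarz) plus viscous corrections bounded by \eqref{main_estiamteL2}. Optimizing in $\ell$ produces the $t^{2/3}$ scaling. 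The precise dependence of $\eps_0$ and $C^\ast$ on $\|(\tfrac{d}{dx}U_0)_+\|_{L^2}$, $C_{L,R}$, $\|U_0\|_{L^\infty}$, $A$, and $T$ can then be read off from the constants in the Oleinik estimate and from the ODE for $X$.
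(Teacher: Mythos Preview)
Your overall architecture is the right one --- relative entropy with a dynamically chosen shift, then an elementary drift estimate for Part~II --- but the treatment of the viscous contribution has a genuine gap.

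When you differentiate the \emph{unlocalized} functional $E(t)=\int_{\R}\eta(U|S)\,dx$, the parabolic term produces, after one integration by parts on each half-line, the \emph{pointwise} boundary contribution
\[
\eps\bigl[\eta'(C_R)-\eta'(C_L)\bigr]\,\partial_x U\bigl(t,X(t)\bigr)
\;=\;\eps\,(C_R-C_L)\,\partial_x U\bigl(t,X(t)\bigr)
\]
(for $\eta(u)=u^2/2$), together with the good dissipation $-\eps\int\eta''(U)|\partial_xU|^2$. This is a point evaluation, not an integral, so there is nothing to ``split into a layer and its complement''. Neither the $L^2$ bound on $(\partial_xU)_+$ nor the dissipation controls $\partial_xU(t,X(t))$ at the required rate: the best generic parabolic smoothing gives $|\partial_xU(t,X(t))|\lesssim(\eps t)^{-1/2}$, which after time integration yields only $O(\sqrt{\eps})$, not $O(\eps\log(1/\eps))$.

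The paper avoids this point evaluation altogether by working not with $E(t)$ but with
\[
H(t)=\int_{\R}\phi_\delta^2\!\left(\frac{|x-X(t)|}{\eps}\right)\eta(U|S)\,dx,
\]
where $\phi_\delta$ is a Lipschitz cutoff that \emph{vanishes at the shift} and equals $1$ outside a layer of width $\delta\eps$. Because $\phi_\delta(0)=0$, no boundary term appears at $X(t)$; instead both the hyperbolic and parabolic contributions become integrals over the layer, weighted by $\phi_\delta\phi_\delta'$ and $(\phi_\delta')^2$ respectively. The hyperbolic integral is shown to be strictly negative (this is where the hypothesis $(\tfrac{d}{dx}U_0)_+\in L^2$ enters: it bounds the oscillation of $U$ across the layer by $\|(\partial_xU_0)_+\|_{L^2}\sqrt{\delta\eps}$, so the Leger sign argument survives when $\delta\eps$ is small). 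The parabolic integral is then absorbed by this negativity except on the set $\{\phi_\delta'>\theta\phi_\delta\}$; an explicit exponential profile for $\phi_\delta$ makes that residual $\lesssim e^{-\theta\delta}$. Choosing $\delta\sim\log(1/\eps)$ and adding back the trivial bound $\int_{|x-X(t)|\le\delta\eps}\eta(U|S)\lesssim\delta\eps$ gives \eqref{main_estiamteL2}. So the role of the one-sided derivative hypothesis is to preserve the hyperbolic negativity \emph{over the layer}, not to bound a viscous commutator.

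Your sketch for Part~II is essentially the paper's argument: test \eqref{scalar_cl} against a cutoff at scale $R$, use Rankine--Hugoniot plus Cauchy--Schwarz, and optimize $R=t^{2/3}$.
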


\begin{rem} 
 For any continuous function $g$, we define the function 
$g_+$ by
$g_+(x):=g(x)\cdot\chi_{\{g>0\}}(x)$ where $\chi_{\{g>0\}}$ is the characteristic function on the positive part of the function $g$. In our theorem, the assumption $U_0\in BV_{loc}$ 
ensures that  $\frac{d}{dx}U_0$ is a Radon
measure. Hence,  $(\frac{d}{dx}U_0)_+$ is also a Radon measure, and the 
condition $(\frac{d}{dx}U_0)_+\in L^2$ makes sense. Note that our estimates do not depend on any local  $BV$ norms of 
$U_0$.
\end{rem}
\begin{rem}\label{L_P_rem} 
 The condtion $ (\frac{d}{dx}U_0)_+\in L^2(\R)$ can be replaced with $ (\frac{d}{dx}U_0)_+\in L^p(\R)$
 for any $1<p\leq\infty$. Indeed, as in Lemma \ref{lem_decreasing_L_2_positive_derivative}, it can be shown that
 $\|(\partial_xU(t))_+\|_{L^p(\mathbb{R})}$ is non-increasing in time (see Remark \ref{L_P_lemma}).
 The only place where the assumption $ (\frac{d}{dx}U_0)_+\in L^2(\R)$ is used is in the estimate 
 \eqref{L_2_cond_used} in the proof
 of Proposition \ref{prop_Hyp}. In order to use $ (\frac{d}{dx}U_0)_+\in L^p(\R)$
 for any $1<p\leq\infty$, one needs to have $(\eps\delta)^{1-1/p}$ instead of $\sqrt{\eps\delta}$ in \eqref{L_2_cond_used}.
\end{rem}
\begin{rem} 
The term $\sigma t$ in the estimate \eqref{estimate_curve_w.r.t._L^2_differL2} is meaningful when
$t\gg (\eps \log(1/\eps))
^3$.
\end{rem}
This result shows a rate of convergence  slightly worse than $\eps$ (to the
$\log$), for the inviscid limit to a shock, measured via  the $L^2$ norm (squared).  
In the case of the limit to a regular solution of the inviscid case, 
the rate of convergence is $\sqrt{\eps}$ (see \cite{vass:handbook}, for instance).
We also refer to 
Goodman and Xin \cite{MR1188982},
Bressan, Liu and Yang \cite{MR1723032},
Lewicka \cite{MR2091511},
 Bressan and Yang \cite{MR2053759}, 
Christoforou and Trivisa \cite{MR2861664}.

\vskip0.3cm
An easy layer study shows that $\eps$ is the optimal rate for shocks with  special 
initial data. Indeed, one can construct an associated steady viscous
 layer (see for example Il{\cprime}in and Ole{\u\i}nik \cite{Oleinik}) $S_1$
solution to 
\begin{equation}\label{layer}
\begin{cases}
&\dis{A(S_1)-A(C_L)-\sigma(S_1-C_L)=S_1',\qquad x\in\R,}\\
&\dis{\lim_{x\to-\infty}S_1=C_L,\qquad \lim_{x\to+\infty}S_1=C_R.}
\end{cases}
\end{equation}
It is easy to show that $S_1((x-\sigma t)/\eps)$ is a solution to (\ref{scalar_cl})
 with initial data 
$S_1(x/\eps)$. In this case, the rate of convergence is of order $\eps$ since:
$$
\int_\R|S_1((x-\sigma t)/\eps)-S_0(x-\sigma t)|^2\,dx
=
 \eps \int_{\R}|S_1(x)-S_0(x)|^2\,dx=C\eps.
$$
This layer study can be extended to the case of small initial perturbation where:
$$
\int_\R|U_0(x)-S_0(x)|^p\,dx\leq C\eps,
$$
for a $1\leq p<\infty$.
In this case, for a  solution $U$ to \eqref{scalar_cl}, we can consider 
$$
V(t,x)=U(\eps t,\eps x),
$$
and study the asymptotic for large time.
The function $V$ is a solution to the equation 
\begin{equation*}
\begin{cases}&\partial_t V+\partial_x A(V) -\partial^2_{xx}V=0,\\
&V(0,x)=U(0,\eps x). \end{cases}
\end{equation*}
The convergence to $S_1$, up to a (constant) drift,
 in this setting,  has been extensively studied 
(see for instance  \cite{Oleinik}, Freist{\"u}hler and Serre \cite{Freis_Serre},
 Kenig and Merle 
\cite{Kenig_Merle}).
In this situation of small perturbation of the initial shock, those results show 
that the convergence with rate 
$\eps$ for the system (\ref{scalar_cl}) is due
to the asymptotic limit in large time  of the layer 
function $U(\cdot/\eps)$ to $S_1((\cdot-\sigma t)/\eps)$.

 \vskip0.3cm
 
 This layer study, however, collapses when
 $$
 \int_\R |U_0(x)-S_0(x)|^2\,dx\gg \eps.
 $$
In this situation, there is too much entropy for the asymptotic limit 
of the layer structure to be true. The physical layer may be destroyed. 
Theorem \ref{main_thmL2} shows that, 
nevertheless, the sharp convergence (up to the $\log$) still holds for any bounded time interval. 
\vskip0.3cm
Taking a limit as $\eps$ goes to $0$ in Theorem  \ref{main_thmL2}, we recover
 the $L^2$ stability  of 
shocks (up to a drift) first showed by Leger in \cite{nick}.  Note that the stability 
result has to be up to a drift which depends on the solution itself (and may be not unique). 
This feature is also true for our result. 
The drift cannot be taken constant, as in the case of the layer problem. 

\vskip0.3cm

Our result  is based on the relative entropy method first used by Dafermos and DiPerna
 to show $L^2$ stability and uniqueness of Lipschitzian solutions to conservation laws
\cite{Dafermos4,Dafermos1,DiPerna}. They showed, in particular, that if $\overline{U}$
 is a Lipschitzian solution of a suitable conservation law on a lapse of time $[0,T]$, 
then for any bounded weak entropic solution $U$ it holds:
\begin{equation}\label{entropy_decrease}
\int_{\R}|U(t)-\overline{U}(t)|^2\,dx  \leq C \int_{\R}|U(0)-\overline{U}(0)|^2\,dx,
\end{equation}
for a constant $C$ depending on $\overline{U}$ and $T$.\\

The relative entropy method is also an  important tool in the study of asymptotic
 limits ($\eps \rightarrow 0$). The main idea is that convergence holds thanks to the strong stability of the 
solutions of the limit equations. Roughly speaking, if we have good consistency of $\eps$
 models, with respect to the limit one, then non linearities are driven by the strong
 stability of the solution of the limit equation. 
Applications of the relative entropy method in this context began with the work of Yau
 \cite{Yau} and have been studied by many others. For incompressible limits, see Bardos,
 Golse, Levermore \cite{Bardos_Levermore_Golse1,Bardos_Levermore_Golse2}, Lions and 
 Masmoudi \cite{Lions_Masmoudi}, Saint Raymond et al. 
\cite{SaintRaymond1,SaintRaymond2,SaintRaymond3,SaintRaymond4}. For  compressible models, 
see Tzavaras \cite{Tzavaras_theory} in the context of relaxation 
and \cite{BV,BTV,MV} in the context of hydrodynamical limits. However, in all 
those cases, the method works as long as the limit solution is Lipschitz.  This is 
due to the fact that strong stability as 
(\ref{entropy_decrease}) is not true when $\overline{U}$ has a discontinuity. It has 
been proven in  \cite{nick, nick_vasseur}, however, that some shocks are  strongly 
stable up to a shift (see also related works from Chen and Frid  \cite{Frid2,Frid1} 
and Chen, Frid 
and Li \cite{Chen1}).  This article is the first extension  of 
those results of stability, to the study of  
inviscid
limits to a shock. 
This is a part of the program initiated in \cite{vass:handbook}.
\vskip0.3cm
The result can be extended to any entropy in the following way. 
Fix any strictly convex function $\eta\in C^2$ as an entropy. 
We define the associated  relative entropy functional $\eta(\cdot|\cdot)$ as
$$
\eta(x|y):=\eta(x)-\eta(y)-\eta^\prime(y)(x-y).
$$
We then have the following extension.
\begin{thm}\label{main_thm} Consider a strictly convex entropy functional $\eta\in C^2(\R)$. 
Let $C_L>C_R$  and $U_0\in L^\infty(\R)\cap BV_{loc}(\mathbb{R}) $ be such that 
$$
(U_0-S_0)\in L^2(\R)\qquad\mathrm{ and}\qquad  
  (\frac{d}{dx}U_0)_+\in L^2(\R).
 $$ 
 Then, there exists $\eps_0>0$ such that for any $T>0$, we have a constant $\C>0$ with the following: 
 \begin{itemize}
 \item[I.]
For any  $U$ solution to  
 \eqref{scalar_cl} with $0<\eps\leq \eps_0$,  
there exists a curve $X\in L^\infty(0,T)$ such that $X(0)=0$, and  for any $0<t<T$, 
and for any $\alpha$ verifying $\eps\leq \alpha\leq\eps_0$, we have:
\begin{equation}\begin{split}\label{main_estiamte}
 &\int_{\{|x-X(t)|\geq \C\alpha\}}\eta(U(t,x)|S(t,x))\,dx\leq
\int_\R\eta(U_0(x)|S_0(x))\,dx
 +\C e^{-\alpha/\eps},
\end{split}\end{equation}
 where $S(t,x):=S_0(x-X(t))$, and $S_0$ is defined by (\ref{def_S}).

\item[II.] Moreover, this curve satisfies
\begin{equation}\label{estimate_curve_lishitz}
|\dot{X}(t)|\leq \C \quad\mbox{ and}
\end{equation}
\begin{equation}\label{estimate_curve_w.r.t._L^2_differ}
|X(t)-\sigma t|^2\leq \C t^{2/3}
\left(\int_\R \eta(U_0(x)|S_0(x))\,dx
+\eps\log(1/\eps)\right).
 \end{equation}

  \item[III.] The constant $\eps_0$ depends only on 
$\|(\frac{d}{dx}U_0)_+\|_{L^2}$, $C_L$, $C_R$, $\|U_0\|_{L^\infty}$, 
the flux function $A$ and the entropy functional $\eta$, while $\C$ depends only on 
the same set as well as T.
  \end{itemize}
 \end{thm}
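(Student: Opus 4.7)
The plan is to extend the shifted relative-entropy argument of Leger \cite{nick} and Leger--Vasseur \cite{nick_vasseur}, used in Theorem \ref{main_thmL2} for $\eta(u) = u^2/2$, to an arbitrary strictly convex $C^2$ entropy, and simultaneously to localize the resulting control outside a strip of width $\sim \alpha$ by exploiting the exponential convergence of the traveling viscous profile $S_1$ defined by (\ref{layer}). Since $\eta(u|v) \sim (u-v)^2$ uniformly on compact sets and since the $L^\infty$ bound on $U$ is guaranteed by the maximum principle, the algebraic structure of the argument essentially survives the replacement $u^2/2 \rightsquigarrow \eta(u)$; what changes is the precise form of the shock dissipation and of the viscous error, both of which now involve $\eta''$.

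A regularization step would make all formal manipulations rigorous: the hypothesis $(\partial_x U_0)_+ \in L^2$ is propagated by the viscous equation because $w := (\partial_x U)_+$ satisfies a Burgers-type inequality yielding a uniform-in-$\eps$ bound on $\|w(t)\|_{L^2}$. One would then differentiate $H(t) := \int_\R \eta(U(t,x) \mid S_0(x - X(t)))\,dx$ using (\ref{scalar_cl}), obtaining schematically
\begin{equation*}
H'(t) = -\dot X(t)\,\bigl[\eta(U\mid S)\bigr]_{X(t)^-}^{X(t)^+} + \Phi(t) - \eps \int_\R \eta''(U)(\partial_x U)^2\,dx + \eps\,\Psi(t),
\end{equation*}
where $\Phi(t)$ is the entropy-flux contribution across the shock and $\Psi(t)$ gathers the integration-by-parts remainder against the Dirac $\partial_x \eta'(S) = (\eta'(C_R) - \eta'(C_L))\,\delta_{X(t)}$. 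Following Leger, the shift would be defined by the ODE that uses the one-sided traces $U(t,X(t)^\pm)$ (well-defined by parabolic smoothing) to cancel the leading shock-localized term; strict convexity of $A$ and of $\eta$ combined with the Rankine--Hugoniot condition for $(C_L, C_R)$ then furnishes a coercive dissipation $D(t) \gtrsim (\dot X(t) - \sigma)^2$ that absorbs $\Phi$ and immediately yields the Lipschitz bound (\ref{estimate_curve_lishitz}).

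To control the viscous remainder $\eps\,\Psi(t)$, I would compare $U$ near the shift with $S_1((x-X(t))/\eps)$ from (\ref{layer}); strict convexity of $A$ at $C_L$ and $C_R$ yields the exponential decay $|S_1(y) - S_0(y)| \lesssim e^{-c|y|}$, so the mismatch between $U$ and $S_0$ outside $\{|x-X(t)| < C\alpha\}$ is of size $e^{-\alpha/\eps}$. Performing the above energy computation on the localized quantity $H_\alpha(t) := \int_{\{|x-X(t)| \geq C\alpha\}} \eta(U\mid S)\,dx$ generates boundary contributions at $x = X(t) \pm C\alpha$ that are exactly of order $e^{-\alpha/\eps}$, producing (\ref{main_estiamte}) after integration in time. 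The estimate (\ref{estimate_curve_w.r.t._L^2_differ}) then follows by integrating the coercive dissipation to obtain an $L^2_t$ control on $\dot X - \sigma$, and interpolating with the $L^\infty_t$ bound from (\ref{estimate_curve_lishitz}) to produce the $t^{2/3}$ power.

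The main obstacle I foresee is the control of $\eps\,\Psi(t)$, whose leading behavior involves the jump of $\eta'(S)$ at $X(t)$ multiplied by $\eps\,\partial_x U(t, X(t))$: \emph{a priori} this is of order one because viscous smoothing makes $\partial_x U \sim 1/\eps$ in a layer of width $\eps$ around the shock. The shift ODE is designed to annihilate the leading part of this term, leaving only a residue controlled by $D(t)$ plus an error tied to how much $U$ deviates from the exact viscous profile $S_1$. Making this quantitative without presupposing that $U$ already resembles a translate of $S_1$ is the central difficulty of the argument and the reason the localization is set at scale $\alpha \geq \eps$ rather than at $\eps$ itself.
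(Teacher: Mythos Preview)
Your proposal contains a genuine gap at exactly the point you flag as the ``central difficulty'': the control of the viscous remainder. You propose to resolve it by comparing $U$ near $X(t)$ with the traveling profile $S_1((x-X(t))/\eps)$ and invoking the exponential tails of $S_1$. But this comparison is precisely what the theorem is designed to avoid: the result is meant to cover perturbations with $\|U_0-S_0\|_{L^2}^2 \gg \eps$, for which $U$ need not resemble any translate of $S_1$ (the introduction says explicitly that the layer structure can be destroyed). Nothing in your sketch explains why the mismatch $U-S_1(\cdot/\eps)$ should be small, and without that the boundary terms at $X(t)\pm C\alpha$ are not $O(e^{-\alpha/\eps})$.

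The paper's mechanism is entirely different. One never computes with $S_1$. Instead one works from the start with the \emph{weighted} functional
\[
H(t)=\int_\R \phi_\delta^2\!\left(\frac{|x-X(t)|}{\eps}\right)\eta(U|S)\,dx,
\]
where $\phi_\delta$ is a smooth nondecreasing cutoff vanishing on $[0,0]$ and equal to $1$ on $[\delta,\infty)$. Because the weight is Lipschitz there is no Dirac mass and hence no term $\eps\,\partial_x U(t,X(t))$ to worry about. Differentiating $H$ splits $dH/dt$ into a ``hyperbolic'' part supported in the strip $|x-X(t)|\le\delta\eps$ and a ``diffusive'' part. The shift is the single-point ODE $\dot X=f(U(t,X(t)),(C_L+C_R)/2)$ with $f=F/\eta(\cdot|\cdot)$; combined with the bound $U(t,X(t))-U(t,x)\le \|(\partial_x U_0)_+\|_{L^2}\sqrt{\delta\eps}$ (this is where the hypothesis $(\partial_x U_0)_+\in L^2$ is actually used), the hyperbolic part is shown to be $\le -\theta\,\eps^{-1}\int \phi\phi'\,(U-S)^2$. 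The diffusive part, after one integration by parts and Young's inequality, is $\le C\,\eps^{-1}\int(\phi')^2(U-S)^2$. These combine to
\[
\frac{dH}{dt}\le C\int_0^\delta (\phi_\delta')^2\,\chi_{\{C\phi_\delta'>\theta\phi_\delta\}}\,dz,
\]
and the exponential smallness $e^{-\theta\delta}$ comes from choosing the explicit $\phi_\delta(x)=e^{\theta(x-\delta)}$ on $[1/\theta,\delta]$, not from any profile analysis. Setting $\delta\eps\sim\alpha$ gives (\ref{main_estiamte}).

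A second discrepancy: the drift estimate (\ref{estimate_curve_w.r.t._L^2_differ}) is \emph{not} obtained by integrating a dissipation $D(t)\gtrsim(\dot X-\sigma)^2$ and interpolating. The paper instead tests the equation against a cutoff $\psi((x-X(s))/R)$ at scale $R$, compares the resulting flux terms to $A(C_L)-A(C_R)$ and $C_L-C_R$ via $\|U-S\|_{L^2}$, and optimizes over $R$; the choice $R=t^{2/3}$ produces the exponent. Your interpolation scheme would give a different (and, without the $L^2_t$ dissipation you assume, unproved) power of $t$.
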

 \begin{rem}\label{L_P_rem2} 
 As in Remark \ref{L_P_rem}, the condtion $ (\frac{d}{dx}U_0)_+\in L^2(\R)$ can be replaced with $ (\frac{d}{dx}U_0)_+\in L^p(\R)$
 for any $1<p\leq\infty$.  
 \end{rem}
Theorem \ref{main_thmL2} is a direct application of Theorem \ref{main_thm} with 
$\eta(x):=x^2$, and $\alpha=\eps\log(1/\eps)$. Indeed, in this case we have 
$\eta(x|y)=(x-y)^2$, and 
$$
\int_{\{|x-X(t)|\leq \C\alpha\}}\eta(U(t,x)|S(t,x))\,dx\leq 
C |\{|x-X(t)|\leq \C\alpha\}|\leq C \C \alpha.
$$
\vskip0.3cm
For the rest of the paper, we will assume that the 
initial value $U_0$ lies not only $BV_{loc}$ but also $C^1$. It allows us to work with
smooth solutions $U\in C^1([0,T]\times \R)$.  The general 
 $BV_{loc}$ case can be obtained by a density argument.
\vskip0.3cm
The idea of the proof is to study the evolution of the relative entropy of the solution with respect to the shock, outside of a small region centered at $X(t)$ (this small region corresponds to the layer localization):
\begin{equation}\label{def_quantity}
\int_{-\infty}^{X(t)-\delta\eps}\eta(U(t,x)|C_L)\,dx+\int_{X(t)+\delta\eps}^{\infty}\eta(U(t,x)|C_R)\,dx.
\end{equation}
The change in time involves two effects. One is due to the hyperbolic part of the equation, and the second involves the parabolic part (or order $\eps$). 
In \cite{nick}, it was shown that,
for the hyperbolic case $\eps=0$, with zero layer width $\delta=0$, the quantity 
\eqref{def_quantity} 
is non-increasing when we choose wisely the drift $X(t)$. When considering  the viscous term, the layer with width $(\delta\eps)$ is introduced  
to avoid the effect of the  viscous term on the layer (see Lemma \ref{baby_lemma}). 
 The idea is then that the stability induced by the hyperbolic part is enough to 
counterbalance the effect of the parabolic term,
  provided that we consider a layer fat enough
(see Proposition \ref{prop_Hyp} and the proof of Proposition \ref{prop_diffusion}). For technical considerations, we will fix $\delta=\log(1/\eps)$. The drift $X(t)$ is still chosen with respect to the hyperbolic part of the equation in a similar way
 as in \cite{nick}. Stability   is preserved, despite the non zero layer width, thanks to a monotonicity property 
induced in the layer  by the additional assumption $(\frac{d}{dx}U_0)_+\in L^2(\R)$.

\section{Evolution of the relative entropy}

For $\delta>0$, we consider a Lipschitz nondecreasing 
function $\phi$ to localize the layer, verifying
\begin{equation*}
\phi(x)=\begin{cases}&0\qquad \mathrm{if} \ \ x\leq 0,\\
&1\qquad \mathrm{if} \ \ x\geq \delta.
\end{cases}\end{equation*}
To get the optimal result, we will later fix a special function (see \eqref{pd}). \\

For any fixed $\delta>0$ and $X\in C^1([0,T])$, we are interesting in the evolution  of 
\begin{equation}\label{def_H}
\He(t):=\int_{-\infty}^{\infty}\phi^2(|x-X(t)|/\eps)\eta(U(t,x)|S(t,x))\,dx,
\end{equation}
where $S(t,x):=S_0(x-X(t))$ and where $S_0$ is defined in \eqref{def_S}. A special value of $\delta$ (depending on $\eps$),  and of the function $X$ will be chosen later.
Note that $\He(t)$ controls the quantity \eqref{def_quantity}. In fact, we have $\eqref{def_quantity}\leq\He(t)$.
\vskip0.3cm
Let us denote $F(\cdot,\cdot)$ the flux of the relative entropy $\eta(\cdot|\cdot)$  defined by
\begin{equation}\begin{split} 
  &F(x,y):=G(x)-G(y)-\eta^\prime(y)(A(x)-A(y)).
 \end{split}\end{equation}
Note that \begin{eqnarray*}
 &&\partial_U\eta(U|C)=\eta^\prime(U)-\eta^\prime(C),\\
 &&\partial_U F(U,C)=G'(U)-\eta'(C)A'(U)=(\eta^\prime(U)-\eta^\prime(C))A'(U).
 \end{eqnarray*}
 So, for any solution  $U$ of \eqref{scalar_cl} and any constant $C$, multiplying \eqref{scalar_cl} by $\eta^\prime(U)-\eta^\prime(C)$, we get 
\begin{equation}\label{relative_entropy_scalar_cl}
\partial_t\eta(U|C)+\partial_xF(U,C)=\eps(\eta^\prime(U)
-\eta^\prime(C))\partial_{xx}^2U
\end{equation} 
We have the following lemma.
\begin{lem}\label{baby_lemma}
The function $\He$, defined in (\ref{def_H}), satisfies the following on $(0,T)$
\begin{equation*}
\begin{split}
&\qquad\qquad\qquad \frac{d\He}{dt}(t)=\\
&{\int_{X(t)-\delta\eps}^{X(t)}
\frac{2}{\eps} 
\phi\Big(\frac{-x+X(t)}{\eps}\Big)
\phi^\prime\Big(\frac{-x+X(t)}{\eps}\Big)
\Big[\dot{X}(t)\eta(U(t,x)|C_L)-F(U(t,x),C_L)
\Big]dx}\\
& + {\eps \int_{-\infty}^{X(t)}
\Big[\phi\Big(\frac{-x+X(t)}{\eps}\Big)\Big]^2
\partial^2_{xx}U(t,x) (\eta^\prime(U(t,x))-\eta^\prime(C_L))dx}\\
\end{split}
\end{equation*}
\begin{equation*}
\begin{split}
&{-\int_{X(t)}^{X(t)+\delta\eps}
\frac{2}{\eps} 
\phi\Big(\frac{x-X(t)}{\eps}\Big)
\phi^\prime\Big(\frac{x-X(t)}{\eps}\Big)
\Big[\dot{X}(t)\eta(U(t,x)|C_R)-F(U(t,x),C_R)
\Big]dx}\\
& +{\eps \int_{X(t)}^\infty
\Big[\phi\Big(\frac{x-X(t)}{\eps}\Big)\Big]^2
\partial^2_{xx}U(t,x) (\eta^\prime(U(t,x))-\eta^\prime(C_R))dx}\\
&:={(L)_{\mbox{\it{Hyp}}}}+{(L)_{\mbox{\it{Dif}}}}+{(R)_{\mbox{\it{Hyp}}}}+{(R)_{\mbox{\it{Dif}}}}.
\end{split}
\end{equation*}
\end{lem}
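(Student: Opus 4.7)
My plan is to differentiate $\He(t)$ directly, using the explicit form of $S(t,x)=S_0(x-X(t))$. First I would split
$$\He(t)=\int_{-\infty}^{X(t)}\phi^2\!\left(\tfrac{X(t)-x}{\eps}\right)\eta(U(t,x)|C_L)\,dx+\int_{X(t)}^{\infty}\phi^2\!\left(\tfrac{x-X(t)}{\eps}\right)\eta(U(t,x)|C_R)\,dx,$$
which exposes a Leibniz structure with moving endpoint $X(t)$. Because $\phi(0)=0$, the boundary contribution at $x=X(t)$ produced by differentiating the endpoint vanishes, so only two kinds of interior contributions remain: the chain rule acting on $\phi^2$ through its $X(t)$ dependence (producing a factor $\dot X(t)/\eps$), and the time derivative $\partial_t\eta(U|C)$ pointwise in the interior.

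For the pointwise time derivative I would substitute the identity (\ref{relative_entropy_scalar_cl}), namely $\partial_t\eta(U|C)=-\partial_xF(U,C)+\eps(\eta'(U)-\eta'(C))\partial^2_{xx}U$, for $C=C_L,C_R$. The diffusion piece, once weighted by $\phi^2$, directly yields $(L)_{\mbox{\it{Dif}}}$ and $(R)_{\mbox{\it{Dif}}}$. The flux piece is handled by integration by parts on $-\int\phi^2\partial_xF(U,C)\,dx$: the endpoint at $x=X(t)$ again contributes zero thanks to $\phi(0)=0$, and the endpoint at $\pm\infty$ vanishes because $F(U,C_L)$ is quadratic in $U-C_L$ near $C_L$ while $U(t)-S_0\in L^2(\R)$ is propagated by the equation (similarly for $C_R$), so the integrand has enough decay after a standard truncation argument.

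What survives the integration by parts is $\int F(U,C)\,\partial_x[\phi^2]\,dx$, which combines with the $\dot X(t)$ term from the chain rule to produce the two hyperbolic contributions $(L)_{\mbox{\it{Hyp}}}$ and $(R)_{\mbox{\it{Hyp}}}$. The signs come out correctly once one keeps track of the opposite signs of $\partial_x[\phi^2((X(t)-x)/\eps)]=-(2/\eps)\phi\phi'$ on the left and $\partial_x[\phi^2((x-X(t))/\eps)]=(2/\eps)\phi\phi'$ on the right; since $\phi'$ is supported on $[0,\delta]$, these integrals collapse onto the layer intervals $[X(t)-\delta\eps,X(t)]$ and $[X(t),X(t)+\delta\eps]$, reproducing the displayed expression.

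The only non-bookkeeping step is the justification that the flux boundary terms at $\pm\infty$ vanish. Under the reduction already announced in the text, $U_0\in C^1$ so $U\in C^1([0,T]\times\R)$, and the $L^2$ closeness of $U(t)$ to $S_0$ is propagated in time; this is enough to carry out the truncation in $x$ and pass to the limit. Once that is in place, everything else is a direct computation with the chain rule, Leibniz's formula, and one integration by parts, and the lemma follows.
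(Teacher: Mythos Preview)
Your proof is correct and follows essentially the same approach as the paper: split $\He=H^L+H^R$, use the relative entropy identity \eqref{relative_entropy_scalar_cl}, and integrate the flux term by parts so that the $\dot X$ contribution and the flux contribution combine into $(L)_{\mbox{\it{Hyp}}}$, $(R)_{\mbox{\it{Hyp}}}$ while the diffusion term is left untouched. The only cosmetic difference is that the paper writes each of $H^L,H^R$ as an integral over all of $\R$ (relying on $\phi\equiv 0$ on $(-\infty,0]$ to localize), thereby avoiding a moving endpoint altogether, whereas you split at $X(t)$ and invoke Leibniz's rule with $\phi(0)=0$; you are also more explicit than the paper about why the flux boundary terms at $\pm\infty$ vanish.
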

\begin{proof}
First we split the term $\He(t)$ into the two parts: 
  \begin{equation*}\begin{split}
  &\He(t)=\int_{-\infty}^{\infty}
\Big[\phi\Big(\frac{|x-X(t)|}{\eps}\Big)\Big]^2\eta(U|S)dx\\
&=\int_{-\infty}^{\infty}\Big(
\Big[\phi\Big(\frac{-x+X(t)}{\eps}\Big)\Big]^2+\Big[\phi\Big(\frac{x-X(t)}{\eps}\Big)\Big]^2\Big)\eta(U|S)dx\\
&={\int_{-\infty}^{\infty}
\Big[\phi\Big(\frac{-x+X(t)}{\eps}\Big)\Big]^2\eta(U|C_L)dx}
+{\int^{\infty}_{-\infty}
\Big[\phi\Big(\frac{x-X(t)}{\eps}\Big)\Big]^2\eta(U|C_R)dx}\\
&:=H^L+H^R.
\end{split}\end{equation*}

To compute $\frac{d}{dt}(H^{L})$,  we put $C=C_L$ in \eqref{relative_entropy_scalar_cl}, multiply by
$\Big[\phi\Big(\frac{-x+X(t)}{\eps}\Big)\Big]^2$,
 and integrate in $x$. Then   we have
\begin{equation*}\begin{split}
\frac{d}{dt}(H^{L})
&=\int_{-\infty}^{\infty}
\partial_t
\Big(\Big[\phi\Big(\frac{-x+X(t)}{\eps}\Big)\Big]^2\Big) \eta(U|C_L)dx\\
&\quad +\int_{-\infty}^{\infty}
\partial_x\Big(\Big[\phi\Big(\frac{-x+X(t)}{\eps}\Big)\Big]^2\Big)
F(U,C_L)dx\\
&\quad +\eps\int_{-\infty}^{\infty}
\Big[\phi\Big(\frac{-x+X(t)}{\eps}\Big)\Big]^2
\partial^2_{xx}U (\eta^\prime(U)-\eta^\prime(C_L))dx
\end{split}\end{equation*} 
\begin{equation*}\begin{split}
&={\int_{X(t)-\delta\eps}^{X(t)}
(\frac{2}{\eps})
\cdot 
\phi\Big(\frac{-x+X(t)}{\eps}\Big)
\phi^\prime\Big(\frac{-x+X(t)}{\eps}\Big)
\Big[\dot{X}(t)\eta(U|C_L)-F(U,C_L)
\Big]dx}\\
&\quad +{\eps\cdot\int_{-\infty}^{X(t)}
\Big[\phi\Big(\frac{-x+X(t)}{\eps}\Big)\Big]^2
\partial^2_{xx}U (\eta^\prime(U)-\eta^\prime(C_L))dx}
=(L)_{\mbox{\it{Hyp}}}+(L)_{\mbox{\it{Dif}}}.
\end{split}\end{equation*}
We get the result for $\frac{d}{dt}(H^{R})={(R)_{\mbox{\it{Hyp}}}}+{(R)_{\mbox{\it{Dif}}}} $ 
in the same way.
\end{proof}

\section{Control of the hyperbolic terms}

In this section, we show that by choosing a special drift function $X(\cdot)$, the hyperbolic effects become
nonpositive. This will be used in section \ref{section_parabolic} to control 
the parabolic effects.\\

Following \cite{nick},  we define the normalized relative entropy flux 
$f(\cdot,\cdot)$ by 
$$f(x,y):=\frac{F(x,y)}{\eta(x|y)}.$$ We have the following properties.
 \begin{lem}\label{lem_basic_propperties} For any $L>0$, there exists  a constant
 $\Lambda>0$,  such that for any $x,y$ with $|x|,|y|\leq L$, we have
 \begin{equation}\begin{split}\label{properties_eta}.\\
&1/\Lambda \leq\eta^{\prime\prime}(x)\leq \Lambda,\\
&\frac{1}{2\Lambda}(x-y)^2 
\leq\eta(x|y)\leq \frac{1}{2}{\Lambda}(x-y)^2,\\
   &|F(x,y)|\leq \Lambda(x-y)^2,\\
 &0\leq (\partial_x f)(x,y)\leq \Lambda, \\
&1/\Lambda\leq(\partial_y f)(x,y).
 \end{split}\end{equation}
 \end{lem}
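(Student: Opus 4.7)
The first three bounds are soft consequences of compactness and strict convexity. The bound $1/\Lambda \le \eta'' \le \Lambda$ on $[-L,L]$ is immediate, since $\eta''$ is continuous and strictly positive on the compact interval, hence attains positive minimum and maximum. Taylor's formula with integral remainder,
\[
\eta(x|y) \;=\; (x-y)^2 \int_0^1 (1-s)\, \eta''(y + s(x-y))\, ds,
\]
then yields $(x-y)^2/(2\Lambda)\le \eta(x|y)\le \Lambda(x-y)^2/2$. For $F$, I would start from the identity $\partial_x F(x,y) = (\eta'(x) - \eta'(y))A'(x)$ combined with $F(y,y) = 0$ to write $F(x,y) = \int_y^x (\eta'(s) - \eta'(y))\, A'(s)\, ds$, and then bound $|\eta'(s) - \eta'(y)|\le \Lambda|s-y|$ and $|A'|$ by a compact-set sup to get $|F(x,y)|\le \Lambda(x-y)^2$, enlarging $\Lambda$ if needed.

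The delicate estimates are on $\partial_x f$ and $\partial_y f$. Both $\eta(x|y)$ and $F(x,y)$ can be written as $(x-y)^2$ times a continuous (actually $C^1$) function by integral Taylor remainders, so $f = F/\eta(x|y)$ is $C^1$ on $[-L,L]^2$ and extends to the diagonal with $f(y,y) = A'(y)$. For $\partial_x f$ I would differentiate the quotient and reorganize,
\[
\partial_x f \;=\; \frac{\eta'(x) - \eta'(y)}{\eta(x|y)^2}\,\bigl[\, A'(x)\,\eta(x|y) - F(x,y)\,\bigr],
\]
then use the identity
\[
A'(x)\,\eta(x|y) - F(x,y) \;=\; \int_y^x (\eta'(s) - \eta'(y))\bigl( A'(x) - A'(s) \bigr)\, ds.
\]
Strict monotonicity of $\eta'$ and $A'$ makes every factor in the integrand share the sign of $(x-y)$; combined with the sign of the prefactor, this gives $\partial_x f \ge 0$. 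The upper bound follows by plugging the Lipschitz bounds into the numerator and $\eta(x|y) \ge (x-y)^2/(2\Lambda)$ into the denominator.

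For $\partial_y f$, using $\partial_y F = -\eta''(y)(A(x) - A(y))$ and $\partial_y \eta(x|y) = -\eta''(y)(x-y)$, one obtains
\[
\partial_y f \;=\; \frac{\eta''(y)}{\eta(x|y)^2}\bigl[\, (x-y)\,F(x,y) - (A(x) - A(y))\,\eta(x|y)\,\bigr],
\]
whose sign is not manifest. The crux of the proof, and the main obstacle, is a Fubini rewriting of both $F$ and $\eta(x|y)$ (using $\eta(x|y) = \int_y^x (x-r)\eta''(r)\,dr$ and $F(x,y) = \int_y^x \eta''(r)(A(x)-A(r))\,dr$), which, for $x>y$, turns the bracket into
\[
\int_y^x \eta''(r)\,(r-y)(x-r)\!\left[\, \tfrac{1}{x-r}\!\int_r^x\! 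A'(s)\,ds - \tfrac{1}{r-y}\!\int_y^r\! A'(s)\,ds \,\right]\!dr.
\]
Strict convexity of $A$ (so $A''\ge a_0 > 0$ on $[-L,L]$) then forces the bracket of averages to be at least $a_0(x-y)/2$; combining with $\int_y^x (r-y)(x-r)\,dr = (x-y)^3/6$ and the $\eta''$-bounds yields a uniform positive lower bound for $\partial_y f$. The case $x<y$ is symmetric, and once this Fubini identity is in place, everything else is routine Taylor and Lipschitz bookkeeping.
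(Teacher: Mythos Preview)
Your argument is correct. The first three inequalities are indeed soft, and your treatment of $\partial_x f$ via the identity
\[
A'(x)\,\eta(x|y) - F(x,y) \;=\; \int_y^x (\eta'(s)-\eta'(y))(A'(x)-A'(s))\,ds
\]
is clean and gives the sign immediately. For $\partial_y f$, your Fubini/Taylor rewriting is right: the algebraic identity
\[
(x-y)(A(x)-A(r)) - (A(x)-A(y))(x-r) \;=\; (r-y)(x-r)\left[\frac{A(x)-A(r)}{x-r} - \frac{A(r)-A(y)}{r-y}\right]
\]
reduces the bracket exactly to the form you claim, and the bound on the difference of averages by $a_0(x-y)/2$ follows from $A'(s)\ge A'(r)+a_0(s-r)$ on $[r,x]$ and $A'(s)\le A'(r)-a_0(r-s)$ on $[y,r]$, so this step is fine. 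Combining with $\int_y^x (r-y)(x-r)\,dr = (x-y)^3/6$ and the upper bound $\eta(x|y)\le \Lambda(x-y)^2/2$ indeed yields a uniform positive lower bound on $\partial_y f$.

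As for comparison with the paper: the paper does not prove this lemma at all; it simply quotes the result from \cite{nick} (Leger). So you have supplied a self-contained argument where the paper gives none. Your approach---factoring $(x-y)^2$ out of both $F$ and $\eta(\cdot|\cdot)$ via integral remainders, then exploiting the monotonicity of $A'$ through an averages-of-$A'$ identity---is exactly the kind of argument one expects for this type of result, and is essentially the one in Leger's paper.
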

The proof of this lemma can be found in   \cite{nick}.
\vskip0.3cm
We now define the shift function $X$. It is the solution of the following O.D.E.
\begin{equation}
\begin{cases}\label{ode}
&\dot{X}(t)=f\Big(U(t,X(t)),\frac{C_L+C_R}{2}\Big)\\
&X(0)=0\end{cases}. \end{equation}
Note that for any $\eps>0$, $U\in C^1([0,T]\times\R)$ (since $U_0\in C^1(\R)$). The existence and uniqueness of $X$ comes from the Cauchy-Lipschitz theorem.
\vskip0.3cm
First, $X$ is Lipschitz, since we have from Lemma \ref{lem_basic_propperties}
\begin{equation}\begin{split}
|\dot{X}(t)|& 
\leq\frac{\Big|F\Big(U(t,X(t)),\frac{C_L+C_R}{2}\Big)\Big|}
{\eta\Big(U(t,X(t))\Big |\frac{C_L+C_R}{2}\Big)}
\leq
2\Lambda^2
\end{split}\end{equation} where we used the fact
$\|U(t)\|_{L^\infty}\leq\|U_0\|_{L^\infty}$ for $t>0$. It proves \eqref{estimate_curve_lishitz}.

\vskip0.3cm

Note that from the definition of $X$, if the velocity $U$ were constant in the layer (that is $U(t,x)\sim U(t,X(t))$ for $-\delta\eps\leq x-X(t)\leq \delta\eps$), then, from the
 last property of Lemma \ref{lem_basic_propperties}, we would have directly that 
$$
(L)_{\mbox{\it{Hyp}}}+(R)_{\mbox{\it{Hyp}}}\leq -\frac{C_L-C_R}{\Lambda} (\eta(U(t,X(t))|C_L)+\eta(U(t,X(t))|C_R)). 
$$
However, this is too much to hope, since the layer characterize 
the region where the function $U(t,\cdot)$ is expected 
to drop from about $C_L$ to about $C_R$. We still can show that the
hyperbolic terms are negative, provided that the behavior
of $U$ in the layer is not too much oscillatory 
(the values can drop, but not much bounce back). This last property of $U$ is proved in the following lemma which can be seen as a weak version of the Ole{\u\i}nik's principle
.
\begin{lem}\label{lem_decreasing_L_2_positive_derivative}
$\|(\partial_xU(t))_+\|_{L^2(\mathbb{R})}\leq\|(\frac{d}{dx}U_0)_+\|_{L^2(\mathbb{R})}$ for any $t>0$.
\end{lem}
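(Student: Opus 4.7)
The plan is to derive an evolution equation for $v:=\partial_xU$ and then test it against $v_+$ in order to obtain a monotonicity estimate on $\|v_+\|_{L^2}^2$. Formally the computation mirrors the classical Ole\u\i{}nik one-sided estimate for Burgers-type equations, with the positive sign of $A''$ providing the key damping.

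First I would differentiate \eqref{scalar_cl} in $x$. Writing $v=\partial_xU$, this produces
\[
\partial_t v + A'(U)\partial_x v + A''(U)\,v^2 = \eps\,\partial^2_{xx} v.
\]
Next, I would multiply by $v_+$ and integrate over $\R$. On the set $\{v>0\}$, $v_+$ and $v$ coincide and so do their derivatives, while $v_+$ vanishes where $v\le 0$; consequently $v_+\partial_xv=v_+\partial_xv_+$ and $\partial_xv_+\,\partial_xv=(\partial_xv_+)^2$ almost everywhere. Moreover $v(v_+)^2=v^2v_+=(v_+)^3$. After integrating by parts in the transport term (using $\partial_xA'(U)=A''(U)v$) and in the diffusion term, these identities combine to give
\[
\frac12\frac{d}{dt}\int_\R(v_+)^2\,dx + \frac12\int_\R A''(U)(v_+)^3\,dx + \eps\int_\R(\partial_xv_+)^2\,dx = 0.
\]
Since $A$ is strictly convex we have $A''(U)\ge 0$, so each of the last two terms is nonnegative, hence $t\mapsto\|v_+(t)\|_{L^2}^2$ is nonincreasing, which is exactly the claimed bound.

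The main technical point to handle carefully is the use of the chain rule for the merely Lipschitz function $s\mapsto s_+$. The cleanest way is to approximate $s\mapsto s_+^2/2$ by a sequence of convex $C^2$ functions $\Phi_n$ with $\Phi_n'(s)\to s_+$ and $\Phi_n''(s)\to \mathbf{1}_{\{s>0\}}$; testing against $\Phi_n'(v)$ gives the same identity up to terms that pass to the limit by monotone/dominated convergence. A secondary issue is justifying the boundary terms at $x=\pm\infty$: since $U_0\in C^1$ is assumed (as stated right after Theorem \ref{main_thm}), standard parabolic smoothing makes $U$ smooth with $v$ and $\partial_xv$ decaying fast enough that the integration by parts has no boundary contributions; alternatively one multiplies by a spatial cutoff and then removes it using the assumption $(\tfrac{d}{dx}U_0)_+\in L^2$ together with the $L^\infty$ bound on $U$. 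The strict convexity assumption on $A$ enters only through $A''(U)\ge 0$; no quantitative lower bound is needed, which is why the estimate holds uniformly in $\eps$.
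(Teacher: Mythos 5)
Your argument is correct and is essentially the paper's own proof: differentiate the equation in $x$, test against $(\partial_xU)_+$, integrate by parts in the transport and diffusion terms so that the convection contribution reduces to $\tfrac12\int A''(U)(\partial_xU)_+^3\,dx\ge 0$, and conclude that $\|(\partial_xU(t))_+\|_{L^2}$ is nonincreasing. The only additions (smooth convex approximation of $s\mapsto s_+^2/2$ and the decay justifying the integrations by parts) are technical refinements of steps the paper performs formally, not a different method.
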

\begin{proof}
We differentiate \eqref{scalar_cl} w.r.t. $x$, multiply $(\partial_xU)_+$ and
integrate in $x$ to get
\begin{equation*}\begin{split}
&0=\int(\partial_xU)_+ \Big[\partial_t\partial_xU+
A^{\prime\prime}(U) |\partial_xU|^2+A^{\prime}(U) \partial^2_{xx}
U-\eps \partial_{xxx}^3U\Big]dx\\
&=\int\Big[\frac{1}{2}\partial_t([(\partial_xU)_+]^2)
+A^{\prime\prime}(U) (\partial_xU)_+^3\\
&\quad\quad\quad\quad\quad\quad\quad
+A^{\prime}(U) \partial_{x}\Big(\frac{[(\partial_xU)_+]^2}{2}\Big)+\eps  
|\partial_x((\partial_xU)_+)|^2
\Big]dx.
 \end{split}\end{equation*} Then, we use the integration by parts to get
 \begin{equation*}\begin{split}
&=\int\Big[\frac{1}{2}\partial_t([(\partial_xU)_+]^2)
+\frac{1}{2}A^{\prime\prime}(U) (\partial_xU)_+^3
+\eps  
|\partial_x((\partial_xU)_+)|^2
\Big]dx\\
&\geq\frac{1}{2}\frac{d}{dt}\int[(\partial_xU)_+]^2
dx.\\
 \end{split}\end{equation*}

\end{proof}
\begin{rem}\label{L_P_lemma}
The result of the above lemma can be extended up to the case $L^p$ for any $1\leq p\leq\infty$. Indeed, for  any finite $p$, we just
multiply  
$\Big((\partial_xU)_+\Big)^{p-1}$ instead of $(\partial_xU)_+$ in the proof. Then the limit case
$p=\infty$ follows directly.
\end{rem}

We now prove the main proposition of this section.

\begin{prop}\label{prop_Hyp}
Let $(L)_{\mbox{\it{Hyp}}}$ and $(R)_{\mbox{\it{Hyp}}}$ be such as in Lemma \ref{baby_lemma}.
There exists a constant $\theta>0$ such that, for any $\eps, \delta$ satisfying
$$
\eps\delta\leq {\theta},
$$ 
we have 
\begin{eqnarray*}
&&\qquad\qquad (L)_{\mbox{\it{Hyp}}}+(R)_{\mbox{\it{Hyp}}}\\
&&\leq -\frac{\theta}{\eps}\int_{X(t)-\delta\eps}^{X(t)+\delta\eps}\phi\left(\frac{|x-X(t)|}{\eps}\right)\phi'\left(\frac{|x-X(t)|}{\eps}\right)(U(t,x)-S(t,x))^2\,dx.
\end{eqnarray*}

\end{prop}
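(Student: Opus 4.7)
The plan is to exploit the factorization $F(U,C)=f(U,C)\,\eta(U|C)$ to rewrite each hyperbolic integrand as $(\dot X(t)-f(U,C_{L/R}))\,\eta(U|C_{L/R})$ times the nonnegative factor $\tfrac{2}{\eps}\phi\phi'$. Since $\phi\phi'\ge 0$ and $\eta(U|C_L)\ge\tfrac{1}{2\Lambda}(U-C_L)^2$ (and similarly for $C_R$) by Lemma \ref{lem_basic_propperties}, it suffices to show a uniform strictly positive gap
\[
f(U(t,x),C_L)-\dot X(t)\ge \kappa \quad\text{on }x\in(X(t)-\delta\eps,X(t)),
\]
and symmetrically $\dot X(t)-f(U(t,x),C_R)\ge\kappa$ on $x\in(X(t),X(t)+\delta\eps)$, for some $\kappa=\kappa(C_L,C_R,\Lambda)>0$. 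Once this is obtained, the claimed bound follows with $\theta$ proportional to $\kappa/\Lambda$, since on $\{x<X(t)\}$ we have $S(t,x)=C_L$ and on $\{x>X(t)\}$ we have $S(t,x)=C_R$.

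To obtain the gap at the single point $x=X(t)$, I use the definition $\dot X(t)=f(U(t,X(t)),(C_L+C_R)/2)$ together with the monotonicity in the second argument $\partial_y f\ge 1/\Lambda$ from Lemma \ref{lem_basic_propperties}. Since $C_L>(C_L+C_R)/2>C_R$, this yields
\[
f(U(t,X(t)),C_L)-\dot X(t)\ge \frac{C_L-C_R}{2\Lambda},\qquad \dot X(t)-f(U(t,X(t)),C_R)\ge \frac{C_L-C_R}{2\Lambda}.
\]

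The main obstacle is to propagate this pointwise gap to the entire layer $|x-X(t)|\le\delta\eps$. This is where Lemma \ref{lem_decreasing_L_2_positive_derivative} (the weak Ole\u\i nik principle) is crucial: it controls how much $U$ can rise across the layer. Using $\partial_x U\le(\partial_x U)_+$ and Cauchy--Schwarz,
\[
\bigl|U(t,X(t))-U(t,x)\bigr|_{+}\le \int_{x\wedge X(t)}^{x\vee X(t)}(\partial_s U)_+\,ds
\le \sqrt{\delta\eps}\,\bigl\|(\tfrac{d}{dx}U_0)_+\bigr\|_{L^2},
\]
where, on the left, I control $U(t,X(t))-U(t,x)$ (what could destroy the left-side gap via $\partial_x f\ge0$), and on the right I control $U(t,x)-U(t,X(t))$. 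Combining this with the Lipschitz bound $\partial_x f\le \Lambda$ from Lemma \ref{lem_basic_propperties} yields on the left
\[
f(U(t,x),C_L)\ge f(U(t,X(t)),C_L)-\Lambda\sqrt{\delta\eps}\,\bigl\|(\tfrac{d}{dx}U_0)_+\bigr\|_{L^2},
\]
and analogously on the right $f(U(t,x),C_R)\le f(U(t,X(t)),C_R)+\Lambda\sqrt{\delta\eps}\,\|(\tfrac{d}{dx}U_0)_+\|_{L^2}$.

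Choosing $\theta>0$ so small that $\Lambda\sqrt{\theta}\,\|(\tfrac{d}{dx}U_0)_+\|_{L^2}\le (C_L-C_R)/(4\Lambda)$, the hypothesis $\eps\delta\le\theta$ preserves half of the pointwise gap throughout the layer. Inserting this into the rewritten $(L)_{\mbox{\it{Hyp}}}$ and $(R)_{\mbox{\it{Hyp}}}$ and using $\eta(U|C_{L/R})\ge (U-C_{L/R})^2/(2\Lambda)$ delivers the claimed inequality with, for instance, $\theta:=\min\bigl(\tfrac{C_L-C_R}{8\Lambda^3},\;\text{the previous }\theta\bigr)$.
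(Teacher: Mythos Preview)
Your proof is correct and follows essentially the same route as the paper's: rewrite the hyperbolic integrands as $\tfrac{2}{\eps}\phi\phi'\,[\dot X(t)-f(U,C_{L/R})]\,\eta(U|C_{L/R})$, use $\partial_y f\ge 1/\Lambda$ to get the strict gap $\tfrac{C_L-C_R}{2\Lambda}$ at the center $x=X(t)$, and propagate the gap across the layer via the weak Ole{\u\i}nik bound (Lemma~\ref{lem_decreasing_L_2_positive_derivative}) together with $0\le\partial_x f\le\Lambda$. The only cosmetic difference is the intermediate pivot in the two-step splitting: the paper passes through $f\bigl(U(t,x),\tfrac{C_L+C_R}{2}\bigr)$ (first move the first argument, then the second), whereas you pass through $f\bigl(U(t,X(t)),C_L\bigr)$ (first move the second argument, then the first); both are equivalent.
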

\begin{proof}
We use the definition of $X(t)$ to get
\begin{equation*}\begin{split}
&(L)_{\mbox{\it{Hyp}}}
=\int_{X(t)-\delta\eps}^{X(t)}
(\frac{2}{\eps})
\cdot 
\phi\Big(\frac{-x+X(t)}{\eps}\Big)\cdot
\phi^\prime\Big(\frac{-x+X(t)}{\eps}\Big)
\cdot\eta(U|C_L)
\cdot h(t,x)dx
\end{split}\end{equation*} where $h(t,x):=\Big[f\Big(U(t,X(t)),\frac{C_L+C_R}{2}\Big)-f(U(t,x),C_L)
\Big]$.\\

In order to make the function $h(t,x)$ strictly negative over the
domain of the above integral, we use the condition
$(\frac{d}{dx}U_0)_+\in{L^2(\mathbb{R})}$. Indeed, we
observe that, for any $x\in[X(t)-\delta\eps,X(t)]$,
\begin{equation}\label{L_2_cond_used}\begin{split}
U(t,X(t))-U(t,x)&=\int_x^{X(t)}(\partial_xU)(t,y)dy
\leq\int_x^{X(t)}(\partial_xU)_+(t,y)dy\\
&\leq\|(\partial_xU(t))_+\|_{L^2(\mathbb{R})}\sqrt{|X(t)-x|}
\leq\|(\frac{d}{dx}U_0)_+\|_{L^2(\mathbb{R})}\sqrt{\delta\eps},
\end{split}\end{equation} where we used that $\|(\partial_xU(t))_+\|_{L^2}$ is not increasing (see Lemma \ref{lem_decreasing_L_2_positive_derivative}).

We can rewrite the function $h$ as
\begin{equation*}\begin{split}
h(t,x)&=f\Big(U(t,X(t)),\frac{C_L+C_R}{2}\Big)
-f\Big(U(t,x),\frac{C_L+C_R}{2}\Big)\\
&\quad\quad\quad\quad\quad
+f\Big(U(t,x),\frac{C_L+C_R}{2}\Big)
-f(U(t,x),C_L).\\
\end{split}\end{equation*}
 Since $f$ is increasing with respect to the first variable, we have
\begin{equation*}\begin{split}
&h(t,x)\leq f\Big(U(t,x)+\|(\frac{d}{dx}U_0)_+\|_{L^2(\mathbb{R})}\sqrt{\delta\eps},\frac{C_L+C_R}{2}\Big)
-f\Big(U(t,x),\frac{C_L+C_R}{2}\Big)\\
&\quad\quad\quad\quad\quad+f\Big(U(t,x),\frac{C_L+C_R}{2}\Big)
-f(U(t,x),C_L).\\
\end{split}\end{equation*} 
Then, thanks to Lemma \ref{lem_basic_propperties},
  we get
\begin{equation*}\begin{split}
&h(t,x)\leq \Lambda\|(\frac{d}{dx}U_0)_+\|_{L^2(\mathbb{R})}
\sqrt{\delta\eps}
-\frac{{C_L-C_R}}{2\Lambda}\leq -\theta<0\\
\end{split}\end{equation*}
for $\sqrt{\delta\eps}$ and
$\theta$ 
small enough.\\

\noindent Since $\phi(\cdot),\phi^\prime(\cdot)$ and $\eta(\cdot|\cdot)\geq0$,  we get
 \begin{equation*}\begin{split}
&(L)_{\mbox{\it{Hyp}}}\leq -\theta\int_{X(t)-\delta\eps}^{X(t) }
 \frac{2}{\eps}
\phi\Big(\frac{-x+X(t)}{\eps}\Big)
\phi^\prime\Big(\frac{-x+X(t)}{\eps}\Big)
\eta(U|C_L)  dx.\\
\end{split}\end{equation*} 
Then, from Lemma \ref{lem_basic_propperties}, we have (changing the constant $\theta$ if necessary)
 \begin{equation*}\begin{split}
&(L)_{\mbox{\it{Hyp}}}\leq-\theta \int_{X(t)-\delta\eps}^{X(t) }
( \frac{2}{\eps})
\phi\Big(\frac{-x+X(t)}{\eps}\Big)
\phi^\prime\Big(\frac{-x+X(t)}{\eps}\Big)
(U-C_L)^2
 dx.\\
\end{split}\end{equation*} 

In a similar way, we obtain the following estimate on $(II)_{Hyp}$.
 \begin{equation*}\begin{split}
&(R)_{\mbox{\it{Hyp}}}\leq-\theta \int_{X(t)}^{X(t)+\delta\eps }
( \frac{2}{\eps})
\phi\Big(\frac{x-X(t)}{\eps}\Big)
\phi^\prime\Big(\frac{x-X(t)}{\eps}\Big)
(U-C_R)^2
 dx.\\
\end{split}\end{equation*} 
Combining the two last inequalities gives the desired result.
\end{proof}

\section{Control of the parabolic terms}\label{section_parabolic}

For any $\delta\geq 1/\theta$, we now fix the function $\phi$  in the following explicit way. 
\begin{equation}\label{pd}
\phi(x)=
\begin{cases}
& \theta e^{1-\theta\delta}x,\qquad \mathrm{for} \ \ x\in [0,1/\theta),\\
&e^{\theta(x-\delta)},\qquad \mathrm{for} \ \ x\in [1/\theta, \delta].
\end{cases}
\end{equation}
We will 
use the straightforward computation:
\begin{equation}\label{computation_pd}
\int_0^\delta (\phi'(x))^2 \chi_{\{\phi'>\theta\phi\}}\,dx=C_\theta \cdot e^{-2\theta \delta}.
\end{equation} 

This section is dedicated to the proof of the following proposition.
\begin{prop}\label{prop_diffusion}
There exists constants $\theta, C>0$ such that for any $\eps,\delta$ verifying
$$
\frac{1}{\theta}\leq \delta\qquad\mbox{and}\qquad \eps\delta\leq {\theta},
$$
we have 
$$
\frac{d\He(t)}{dt}\leq Ce^{-\theta\delta}.
$$
\end{prop}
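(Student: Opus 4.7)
By Lemma \ref{baby_lemma}, $\frac{d\He}{dt}=(L)_{\mbox{\it{Hyp}}}+(L)_{\mbox{\it{Dif}}}+(R)_{\mbox{\it{Hyp}}}+(R)_{\mbox{\it{Dif}}}$, and Proposition \ref{prop_Hyp} already supplies a negative hyperbolic dissipation $-\frac{\theta}{\eps}\int\phi\phi'(U-S)^{2}\,dx$. The goal is to bound the diffusive pair $(L)_{\mbox{\it{Dif}}}+(R)_{\mbox{\it{Dif}}}$, up to an error of order $e^{-\theta\delta}$, by something that this negative hyperbolic term can absorb. The special shape of $\pd$ (pure exponential on $[1/\theta,\delta]$, with $\phi'/\phi=\theta$ there) is tailored exactly so that this absorption works outside the small transition region $[0,1/\theta)$, whose cost is precisely quantified by (\ref{computation_pd}).

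\textbf{Step 1: IBP and Young.} In $(L)_{\mbox{\it{Dif}}}$ I would integrate by parts in $x$, using $\phi(0)=0$ at $x=X(t)$ and the decay of $U-C_L$ at $-\infty$ to kill boundary terms. The $x$-derivative hits both $\phi^{2}\bigl((-x+X)/\eps\bigr)$ and $\eta'(U)-\eta'(C_L)$, giving
\begin{equation*}
(L)_{\mbox{\it{Dif}}}=2\int_{-\infty}^{X(t)}\phi\phi'(\eta'(U)-\eta'(C_L))\partial_xU\,dx-\eps\int_{-\infty}^{X(t)}\phi^{2}\eta''(U)(\partial_xU)^{2}\,dx.
\end{equation*}
Applying Young's inequality to the mixed term with the natural weights $\sqrt{\eps\eta''(U)}\,\phi\,\partial_xU$ and $\phi'(\eta'(U)-\eta'(C_L))/\sqrt{\eps\eta''(U)}$ exactly cancels the negative dissipation on the right, and using $1/\Lambda\leq\eta''\leq\Lambda$ and $|\eta'(U)-\eta'(C_L)|\leq\Lambda|U-C_L|$ from Lemma \ref{lem_basic_propperties} leaves
\begin{equation*}
(L)_{\mbox{\it{Dif}}}\leq \frac{C}{\eps}\int_{X-\delta\eps}^{X}\Bigl(\phi'\bigl(\tfrac{-x+X}{\eps}\bigr)\Bigr)^{2}(U-C_L)^{2}\,dx,
\end{equation*}
with an identical bound for $(R)_{\mbox{\it{Dif}}}$.

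\textbf{Step 2: splitting the support of $\phi'$.} Change variable to $y=(-x+X)/\eps\in[0,\delta]$ and split the integrand on the sets $\{\phi'(y)\leq\theta\phi(y)\}$ and $\{\phi'(y)>\theta\phi(y)\}$. On the first set, $(\phi')^{2}\leq\theta\phi\phi'$, so this contribution is of the form $\frac{C\theta}{\eps}\int\phi\phi'(U-C_L)^{2}\,dx$; choosing the constant $\theta$ in the definition of $\pd$ small enough relative to the hyperbolic constant delivered by Proposition \ref{prop_Hyp}, it is absorbed by a fraction of the negative hyperbolic dissipation. On the second set we use the uniform bound $|U-C_L|\leq 2\|U_0\|_{L^\infty}+|C_L|$ together with (\ref{computation_pd}):
\begin{equation*}
\frac{C}{\eps}\int_{\{\phi'>\theta\phi\}}(\phi')^{2}(U-C_L)^{2}\,dx\leq C\int_{0}^{\delta}(\phi'(y))^{2}\chi_{\{\phi'>\theta\phi\}}(y)\,dy=C_{\theta}\,e^{-2\theta\delta}.
\end{equation*}
Combining this with the parallel bound for $(R)_{\mbox{\it{Dif}}}$, and absorbing the factor $2$ into a renamed exponent, yields $\frac{d\He}{dt}\leq Ce^{-\theta\delta}$.

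\textbf{Main obstacle.} The delicate point is the bookkeeping of constants in Step 2: the exponent $\theta$ baked into $\pd$ must be small enough relative to the hyperbolic-dissipation constant of Proposition \ref{prop_Hyp} for the manageable part of the diffusive remainder to be genuinely absorbed, rather than only comparable. The hypotheses $1/\theta\leq\delta$ and $\eps\delta\leq\theta$ appearing in the statement are exactly what make $\pd$ well-defined and Proposition \ref{prop_Hyp} applicable. Everything else is a mechanical combination of integration by parts, Young's inequality, and the explicit identity (\ref{computation_pd}) for the mass of $(\pd')^{2}$ on the set where $\pd$ deviates from the pure exponential scaling.
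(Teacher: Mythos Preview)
Your proposal is correct and follows essentially the same route as the paper: integrate by parts in $(L)_{\mbox{\it{Dif}}}$ and $(R)_{\mbox{\it{Dif}}}$, use Young's inequality to trade the cross term for $\frac{C}{\eps}\int(\phi')^2(U-S)^2$, and then let the hyperbolic dissipation from Proposition~\ref{prop_Hyp} absorb the part of this where $\phi'\leq\theta\phi$, leaving only the contribution of $\{\phi'>\theta\phi\}$, which is controlled by the explicit identity (\ref{computation_pd}). The only cosmetic difference is that the paper first sums the hyperbolic and diffusive bounds into the integrand $\phi'(C\phi'-\theta\phi)(U-S)^2$ and then restricts to where it is positive, whereas you split the diffusive integral on $\{\phi'\lessgtr\theta\phi\}$ before invoking the hyperbolic term; the two are equivalent. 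Your remark in the ``Main obstacle'' paragraph that the exponent $\theta$ in $\pd$ must be chosen small relative to the hyperbolic constant is exactly the content of the paper's ``changing the constant $\theta$ if needed.''
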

\begin{proof}
First, we estimate the term  $(L)_{\mbox{\it{Dif}}}$. Integrating  by parts,  we obtain
\begin{equation*}\begin{split}
(L)_{\mbox{\it{Dif}}}&=\int_{-\infty}^{X(t)}
 2\phi\Big(\frac{-x+X(t)}{\eps}\Big) 
\phi^\prime\Big(\frac{-x+X(t)}{\eps}\Big) 
\partial_{x}U  (\eta^\prime(U)-\eta^\prime(C_L))
dx\\
&\quad -2\eps \int_{-\infty}^{X(t)}
\Big[\phi\Big(\frac{-x+X(t)}{\eps}\Big)\Big]^2 
\eta^{\prime\prime}(U) 
|\partial_{x}U|^2dx\\
\end{split}\end{equation*}
 Then, using H\"{o}lder's inequality and  Lemma \ref{lem_basic_propperties}, we get
 \begin{equation*}\begin{split}
&(L)_{\mbox{\it{Dif}}}\leq
\frac{2\eps}{\Lambda} \int_{-\infty}^{X(t)}
\Big[\phi\Big(\frac{-x+X(t)}{\eps}\Big)\Big]^2 
|\partial_{x}U|^2dx\\
&\quad+\frac{\Lambda}{8\eps}\int_{\infty}^{X(t)}
\Big[2 \phi^\prime\Big(\frac{-x+X(t)}{\eps}\Big)(\eta^\prime(U)-\eta^\prime(C_L))\Big]^2dx\\
&\quad-\frac{2\eps}{\Lambda} \int_{-\infty}^{X(t)}
\Big[\phi\Big(\frac{-x+X(t)}{\eps}\Big)\Big]^2  
|\partial_{x}U|^2dx\\
&\leq\frac{C}{\eps}\int_{X(t)-\delta\eps}^{X(t)}
\Big[ \phi^\prime\Big(\frac{-x+X(t)}{\eps}\Big)\Big]^2
|U-C_L|^2 \,dx.
\end{split}\end{equation*}
In the same way, we obtain the following estimate for $(R)_{\mbox{\it{Dif}}}$.
$$
(R)_{\mbox{\it{Dif}}}\leq \frac{C}{\eps}\int_{X(t)}^{X(t)+\delta\eps}
\Big[ \phi^\prime\Big(\frac{x-X(t)}{\eps}\Big)\Big]^2
|U-C_R|^2 \,dx.
$$
Combining the two last inequalities, we find
\begin{equation}\label{estimate_diffusion}
(L)_{\mbox{\it{Dif}}}+(R)_{\mbox{\it{Dif}}}\leq \frac{C}{\eps}\int_{X(t)-\delta\eps}^{X(t)+\delta\eps}
\Big[ \phi^\prime\Big(\frac{|x-X(t)|}{\eps}\Big)\Big]^2
|U(t,x)-S(t,x)|^2 \,dx.
\end{equation}

\vskip0.3cm
\noindent Using Lemma \ref{baby_lemma}, Proposition \ref{prop_Hyp}, and (\ref{estimate_diffusion}), we find
\begin{equation}\begin{split}\label{eq_keep_charac}
&\frac{d\He(t)}{dt}\leq
 \frac{1}{\eps}\int_{X(t)-\delta\eps}^{X(t)+\delta\eps}
\Big[\phi^\prime (C\phi'-\theta\phi)\Big]\left(\frac{|x-X(t)|}{\eps}\right)|U(t,x)-S(t,x)|^2\,dx.
 \end{split}\end{equation} 
 Using that $U-S$ is a bounded function, and doing the change of 
variables $z=(x-X(t))/\eps$, we find:
\begin{equation*}\begin{split}
 \frac{d\He(t)}{dt} &
 \leq  \frac{C}{\eps}\int_{X(t)-\delta\eps}^{X(t)+\delta\eps}
\Big[(\phi^\prime)^2 \chi_{\{C\phi'-\theta\phi>0\}}\Big] \left(\frac{|x-X(t)|}{\eps}\right)|U(t,x)-S(t,x)|^2\,dx\\
&\leq \frac{C\|U(t)-S(t)\|^2_{L^\infty}}{\eps}\int_{X(t)-\delta\eps}^{X(t)+\delta\eps}
\Big[(\phi^\prime)^2 \chi_{\{C\phi'-\theta\phi>0\}}\Big] \left(\frac{|x-X(t)|}{\eps}\right) \,dx\\
&\leq C\int_{0}^{\delta}
(\phi^\prime)^2(z) \chi_{\{C\phi'-\theta\phi>0\}}(z)\,dz.
 \end{split}\end{equation*} 
 Changing the constant $\theta$ if needed, and using (\ref{computation_pd}), gives the desired result.
 \end{proof}
 
 \section{Proof of Theorem \ref{main_thm}}
 This section is devoted to the proof of Theorem \ref{main_thm}. 
  Integrating the estimate of Proposition \ref{prop_diffusion} between $0$
  and $t\in(0,T)$ gives the result of (I). 
  Indeed,  for any $\eps,\delta$ with 
$
\frac{1}{\theta}\leq \delta\mbox{ and } \eps\delta\leq {\theta},
$
 where 
  $\theta$ is the constant from Proposition \ref{prop_diffusion},
  we have
  \begin{equation*}\begin{split}
  \int_{\{|x-X(t)|\geq \delta\eps\}}\eta(U(t,x)|S(t,x))\,dx
  &\leq \He(t)
  \leq \He(0)  +\int_0^t \frac{d}{dt}\He(s)\,ds\\
  &\leq
\int_\R\eta(U_0|S_0)\,dx
 +CT e^{-\theta\delta}
 \end{split}\end{equation*}   
  By taking $\eps_0:=\theta^2$, 
     we have  for any $\eps\leq\alpha\leq\eps_0$, 
  \begin{equation*}\begin{split}
  \int_{\{|x-X(t)|\geq \alpha/\theta\}}\eta(U(t,x)|S(t,x))\,dx
   &\leq
\int_\R\eta(U_0|S_0)\,dx
 +CT e^{-\alpha/\eps}.
 \end{split}\end{equation*} It proves our main estimate \eqref{main_estiamte}
 by taking $\C$ large enough.\\

Observe that
\begin{equation*}\begin{split}
 \int_{\R} \eta(U|S)\,dx&=
 \int_{\{|x-X(t)|\geq \C\alpha\}}\eta(U|S)\,dx
 +\int_{\{|x-X(t)|< \C\alpha\}}\eta(U|S)\,dx\\ 
\end{split}\end{equation*}
and the second term is bounded by $C\C\alpha$.
  Thus, by taking $\alpha=\eps\log(1/\eps)$, we obtain for any $t\in(0,T)$,
 \begin{equation}\label{estimate_temp}
  \int_{\R} \eta(U|S)\,dx\leq \int_{\R} \eta(U_0|S_0)\,dx+\C\eps\log(1/\eps)
 \end{equation} for any $\eps\leq\eps_0$ 
(changing $\eps_0$ and $\C$ if needed).
  
 \vskip0.3cm
  It only remains to prove \eqref{estimate_curve_w.r.t._L^2_differ}. 
We define first
$\psi(x):=\begin{cases}
&0 \mbox{ if } |x|>2,\\
&1 \mbox{ if } |x|\leq1\\
&2-|x| \mbox{ if } 1<|x|\leq2
\end{cases}$. Let $s\in(0,t)$ and $R>0$. We multiply $\Psi_R(s,x):=\psi(\frac{x-X(s)}{R})$
to the  equation  \eqref{scalar_cl} and integrate in $x$ to get
\begin{equation*}\begin{split}
0=&-\frac{d}{ds}\int \Psi_R\cdot U dx+\int\partial_x(\Psi_R) A(U) dx
+\int\partial_t(\Psi_R) U dx+\eps\int\Psi_R
\cdot\partial^2_{xx}Udx\\
&=-
\underbrace{\frac{d}{ds}\int \psi(\frac{x-X(s)}{R})\cdot U(s,x) dx}_{(I)}\\
&\quad\quad+\underbrace{\frac{1}{R}\int\psi^\prime(\frac{x-X(s)}{R})\cdot \Big(
A(U(s,x))-\dot{X}(s)U(s,x) \Big)dx}_{(II)}\\
&\quad\quad-\underbrace{\eps\frac{1}{R}\int\psi^\prime(\frac{x-X(s)}{R})
\cdot\partial_{x}U(s,x)dx}_{(III)}.
 \end{split}\end{equation*}
By using the above observation,
 we have
\begin{equation*}\begin{split}
&(\sigma-\dot{X}(s))=\frac{1}{C_L-C_R}\Big(A(C_L)-A(C_R)-(C_L-C_R)\dot{X}(s)\Big)\\
&=\frac{1}{C_L-C_R}\Big(A(C_L)-A(C_R)-(C_L-C_R)\dot{X}(s)
-(II)+(I)+(III)\Big).\\
 \end{split}\end{equation*} Then we integrate the above equation in time on $[0,t]$ 
to get:
\begin{equation}\begin{split}\label{estimate_sum}
|\sigma t-{X}(t)|&\leq
C\Big(t\cdot\max_{s\in(0,t)}\underbrace{\Big|A(C_L)-A(C_R)-(C_L-C_R)\dot{X}(s)
-(II)\Big|}_{(II^\prime)}\\
&\quad+\Big|\int_0^t(I)ds\Big|+t\cdot\max_{s\in(0,t)}\Big|(III)\Big|\Big).\\
 \end{split}\end{equation}

We observe 
\begin{equation*}\begin{split}
{(II^\prime)}\leq&\underbrace{\Big|A(C_L)-A(C_R)
-{\frac{1}{R}\int\psi^\prime(\frac{x-X(s)}{R})\cdot 
A(U)dx}\Big|}_{(II^\prime_1)}\\
&+\underbrace{\Big|-(C_L-C_R)\dot{X}(s)+\frac{1}{R}\int\psi^\prime(\frac{x-X(s)}{R})\cdot \Big(
\dot{X}(s)U(s,x) \Big)dx\Big|}_{(II^\prime_2)}.\\
 \end{split}\end{equation*}
 For  $(II^\prime_1)$, we compute
\begin{equation*}\begin{split}
(II^\prime_1)
&=\Big|A(C_L)
-{\frac{1}{R}\int_{-2R+X(s)}^{-R+X(s)}
A(U)dx}
-A(C_R)
+{\frac{1}{R}\int_{R+X(s)}^{2R+X(s)}
A(U)dx}\Big|\\
&\leq
{\frac{1}{R}\Big[\int_{-2R+X(s)}^{-R+X(s)}
|A(C_L)-A(U)|dx}
+{\int_{R+X(s)}^{2R+X(s)}
|A(U)-A(C_R)|dx}\Big].
\end{split}\end{equation*} We use  $|A(y)-A(z)|\leq
C
|y-z|$ for $|y|,|z|\leq {M_1}$ to get
\begin{equation*}\begin{split}
\leq
{\frac{
C
}{R}\int_{-2R+X(s)}^{2R+X(s)}
|U-S|dx}.
\end{split}\end{equation*} We use   H\"{o}lder's inequality
and Lemma
 \ref{lem_basic_propperties} to get
\begin{equation*}\begin{split}
(II^\prime_1)^2&
\leq\frac{C}{{R}}\cdot
{\int_{\R}\eta(U(s)|S(s))dx}.
 \end{split}\end{equation*} 
Likewise, for the second term $(II^\prime_2)$, we have
\begin{equation*}\begin{split}
(II^\prime_2)&=|\dot{X}(s)|\cdot\Big|-(C_L-C_R)
+\frac{1}{R}\int\psi^\prime(\frac{x-X(s)}{R})\cdot 
U(s,x) dx\Big|\\
&\leq
{\frac{
C
}{R}\int_{-2R+X(s)}^{2R+X(s)}
|U-S|dx}
\leq\frac{C}{\sqrt{R}}\cdot
\|U(s)-S(s)\|_{L^2(\mathbb{R})}
 \end{split}\end{equation*} where we used $|\dot{X}(s)|\leq
C$.
Thus we get 
\begin{equation}\begin{split}\label{estimate_(II)}
(II^\prime)^2\leq\frac{C}{{R}}\cdot
{\int_{\R}\eta(U(s)|S(s))dx}.
 \end{split}\end{equation}
On the other hand, we compute
\begin{equation*}\begin{split}
&\Big|\int_0^t(I)ds\Big|
=\Big|\int \psi(\frac{x-X(t)}{R})\cdot U(t,x) dx-
\int \psi(\frac{x}{R})\cdot U_0(x) dx\Big|\\
&=\Big|\int \psi(\frac{x-X(t)}{R})\cdot \Big(U(t,x)-S(t,x)\Big) dx+
\int \psi(\frac{x-X(t)}{R})\cdot S(t,x) dx\\
&-\int \psi(\frac{x}{R})\cdot S_0(x) dx
-\int \psi(\frac{x}{R})\cdot \Big(U_0(x)-S_0(x) \Big)dx\Big|.
 \end{split}\end{equation*} Note that  
$\int \psi(\frac{x-X(t)}{R})\cdot S(t,x) dx=\int \psi(\frac{x}{R})\cdot S_0(x) dx.$ Thus, we have
\begin{equation*}\begin{split}
&\leq\Big|\int \psi(\frac{x-X(t)}{R})\cdot \Big(U(t,x)-S(t,x)\Big) dx\Big|
+\Big|\int \psi(\frac{x}{R})\cdot \Big(U_0(x)-S_0(x) \Big)dx\Big|.
 \end{split}\end{equation*} We use  H\"{o}lder's inequality and Lemma
 \ref{lem_basic_propperties}
 to get
\begin{equation}\begin{split}\label{estimate_(I)}
\Big|\int_0^t(I)ds\Big|^2&\leq C{R}\Big({\int_{\R}\eta(U(t)|S(t))dx}+
{\int_{\R}\eta(U_0|S_0)dx}\Big).
 \end{split}\end{equation}  
Also, we have
\begin{equation}\begin{split}\label{estimate_(III)}
&\Big|(III)\Big|=
\frac{\eps}{R}\Big|\int\psi^\prime(\frac{x-X(s)}{R})
\cdot\partial_{x}U(s,x)dx\Big|\\
&=
\frac{\eps}{R}\Big|\int_{-2R+X(s)}^{-R+X(s)}
\partial_{x}U(s,x)dx-
\int_{R+X(s)}^{2R+X(s)}
\partial_{x}U(s,x)dx\Big|\\
&\leq \frac{\eps}{R}\cdot 4\cdot\|U(s)\|_{L^\infty}
\leq \frac{C\cdot\eps}{R}.
 \end{split}\end{equation}

Finally, by using \eqref{estimate_temp}, we combine 
\eqref{estimate_(II)}, \eqref{estimate_(I)} and  \eqref{estimate_(III)} with 
\eqref{estimate_sum}
to get, for any  $t\in(0,T)$,
\begin{equation*}\begin{split}
|\sigma t-{X}(t)|^2&\leq C\Big(
\frac{ t^2}{{R}}+
{R}
\Big)\cdot\Big(
{\int_{\R}\eta(U_0|S_0)dx}+{
\eps\log(\frac{1}{\eps})}\Big)
+\frac{C\cdot\eps^2\cdot t^2}{R^2}.\\
 \end{split}\end{equation*} Since the above estimate holds for any $0<R<\infty$, 
 the estimate
 \eqref{estimate_curve_w.r.t._L^2_differ} follows once we take  $R:=t^{2/3}$
(changing $\C$ if needed).

\begin{ack}
The first author has been partially supported by the National Science
Foundation (NSF) grant DMS-1159133.
The second author was partially funded by the NSF.
 We would like to thank the anonymous referees for many useful suggestions.

\end{ack}

\bibliographystyle{plain}

\begin{thebibliography}{10}

\bibitem{Bardos_Levermore_Golse1}
C.~Bardos, F.~Golse, and C.~D. Levermore.
\newblock Fluid dynamic limits of kinetic equations. {I}. {F}ormal derivations.
\newblock {\em J. Statist. Phys.}, 63(1-2):323--344, 1991.

\bibitem{Bardos_Levermore_Golse2}
C.~Bardos, F.~Golse, and C.~D. Levermore.
\newblock Fluid dynamic limits of kinetic equations. {II}. {C}onvergence proofs
  for the {B}oltzmann equation.
\newblock {\em Comm. Pure Appl. Math.}, 46(5):667--753, 1993.

\bibitem{BTV}
F.~Berthelin, A.~E. Tzavaras, and A.~Vasseur.
\newblock From discrete velocity {B}oltzmann equations to gas dynamics before
  shocks.
\newblock {\em J. Stat. Phys.}, 135(1):153--173, 2009.

\bibitem{BV}
F.~Berthelin and A.~Vasseur.
\newblock From kinetic equations to multidimensional isentropic gas dynamics
  before shocks.
\newblock {\em SIAM J. Math. Anal.}, 36(6):1807--1835 (electronic), 2005.

\bibitem{MR2053759}
A.~Bressan and T.~Yang.
\newblock On the convergence rate of vanishing viscosity approximations.
\newblock {\em Comm. Pure Appl. Math.}, 57(8):1075--1109, 2004.

\bibitem{MR1723032}
T.P. Liu, A. Bressan, and T.~Yang.
\newblock {$L^1$} stability estimates for {$n\times n$} conservation laws.
\newblock {\em Arch. Ration. Mech. Anal.}, 149(1):1--22, 1999.

\bibitem{Frid2}
G.-Q. Chen and H.~Frid.
\newblock Divergence-measure fields and hyperbolic conservation laws.
\newblock {\em Arch. Ration. Mech. Anal.}, 147(2):89--118, 1999.

\bibitem{Frid1}
G.-Q. Chen and H.~Frid.
\newblock Large-time behavior of entropy solutions of conservation laws.
\newblock {\em J. Differential Equations}, 152(2):308--357, 1999.

\bibitem{Chen1}
G.-Q. Chen, H.~Frid, and Y.~Li.
\newblock Uniqueness and stability of {R}iemann solutions with large
  oscillation in gas dynamics.
\newblock {\em Comm. Math. Phys.}, 228(2):201--217, 2002.

\bibitem{MR2861664}
C.~Christoforou and K.~Trivisa.
\newblock Rate of convergence for vanishing viscosity approximations to
  hyperbolic balance laws.
\newblock {\em SIAM J. Math. Anal.}, 43(5):2307--2336, 2011.

\bibitem{Dafermos4}
C.~Dafermos.
\newblock Entropy and the stability of classical solutions of hyperbolic
  systems of conservation laws.
\newblock In {\em Recent mathematical methods in nonlinear wave propagation
  ({M}ontecatini {T}erme, 1994)}, volume 1640 of {\em Lecture Notes in Math.},
  pages 48--69. Springer, Berlin, 1996.

\bibitem{Dafermos1}
C.~M. Dafermos.
\newblock The second law of thermodynamics and stability.
\newblock {\em Arch. Rational Mech. Anal.}, 70(2):167--179, 1979.

\bibitem{DiPerna}
R.~J. DiPerna.
\newblock Uniqueness of solutions to hyperbolic conservation laws.
\newblock {\em Indiana Univ. Math. J.}, 28(1):137--188, 1979.

\bibitem{Freis_Serre}
H.~Freist{\"u}hler and D.~Serre.
\newblock {$L^1$} stability of shock waves in scalar viscous conservation laws.
\newblock {\em Comm. Pure Appl. Math.}, 51(3):291--301, 1998.

\bibitem{SaintRaymond1}
F.~Golse and L.~Saint-Raymond.
\newblock The {N}avier-{S}tokes limit of the {B}oltzmann equation for bounded
  collision kernels.
\newblock {\em Invent. Math.}, 155(1):81--161, 2004.

\bibitem{MR1188982}
J.~Goodman and Z.~Xin.
\newblock Viscous limits for piecewise smooth solutions to systems of
  conservation laws.
\newblock {\em Arch. Rational Mech. Anal.}, 121(3):235--265, 1992.

\bibitem{MR0047234}
E.~Hopf.
\newblock The partial differential equation {$u_t+uu_x=\mu u_{xx}$}.
\newblock {\em Comm. Pure Appl. Math.}, 3:201--230, 1950.

\bibitem{Oleinik}
A.~M. Il{\cprime}in and O.~A. Ole{\u\i}nik.
\newblock Behavior of solutions of the {C}auchy problem for certain quasilinear
  equations for unbounded increase of the time.
\newblock {\em Dokl. Akad. Nauk SSSR}, 120:25--28, 1958.

\bibitem{Kenig_Merle}
C.~E. Kenig and F.~Merle.
\newblock Asymptotic stability and {L}iouville theorem for scalar viscous
  conservation laws in cylinders.
\newblock {\em Comm. Pure Appl. Math.}, 59(6):769--796, 2006.

\bibitem{Kruzkov}
S.~N. Kru{\v{z}}kov.
\newblock First order quasilinear equations with several independent variables.
\newblock {\em Mat. Sb. (N.S.)}, 81 (123):228--255, 1970.

\bibitem{Kuznetsov}
N.N. Kuznetsov.
\newblock Accuracy of some approximate methods for computing the weak solutions
  of a first-order quasi-linear equation.
\newblock {\em U.S.S.R. Comput. Math. Math. Phys.}, 16:105--119, 1976.

\bibitem{nick}
N.~Leger.
\newblock {$L^2$} stability estimates for shock solutions of scalar
  conservation laws using the relative entropy method.
\newblock {\em Arch. Ration. Mech. Anal.}, 199(3):761--778, 2011.

\bibitem{nick_vasseur}
N.~Leger and A.~Vasseur.
\newblock Relative entropy and the stability of shocks and contact
  discontinuities for systems of conservation laws with non-{BV} perturbations.
\newblock {\em Arch. Ration. Mech. Anal.}, 201(1):271--302, 2011.

\bibitem{MR2091511}
M.~Lewicka.
\newblock Well-posedness for hyperbolic systems of conservation laws with large
  {BV} data.
\newblock {\em Arch. Ration. Mech. Anal.}, 173(3):415--445, 2004.

\bibitem{Lions_Masmoudi}
P.-L. Lions and N.~Masmoudi.
\newblock From the {B}oltzmann equations to the equations of incompressible
  fluid mechanics. {I}, {II}.
\newblock {\em Arch. Ration. Mech. Anal.}, 158(3):173--193, 195--211, 2001.

\bibitem{SaintRaymond3}
N.~Masmoudi and L.~Saint-Raymond.
\newblock From the {B}oltzmann equation to the {S}tokes-{F}ourier system in a
  bounded domain.
\newblock {\em Comm. Pure Appl. Math.}, 56(9):1263--1293, 2003.

\bibitem{MV}
A.~Mellet and A.~Vasseur.
\newblock Asymptotic analysis for a {V}lasov-{F}okker-{P}lanck/compressible
  {N}avier-{S}tokes system of equations.
\newblock {\em Comm. Math. Phys.}, 281(3):573--596, 2008.

\bibitem{MR0094541}
O.~A. Ole{\u\i}nik.
\newblock Discontinuous solutions of non-linear differential equations.
\newblock {\em Uspehi Mat. Nauk (N.S.)}, 12(3(75)):3--73, 1957.

\bibitem{Perthame}
B. Perthame.
\newblock Kinetic Formulation of Conservation Laws.
\newblock {\em Oxford University Press, New York}, 2002.

\bibitem{SaintRaymond4}
L.~Saint-Raymond.
\newblock Convergence of solutions to the {B}oltzmann equation in the
  incompressible {E}uler limit.
\newblock {\em Arch. Ration. Mech. Anal.}, 166(1):47--80, 2003.

\bibitem{SaintRaymond2}
L.~Saint-Raymond.
\newblock From the {BGK} model to the {N}avier-{S}tokes equations.
\newblock {\em Ann. Sci. \'Ecole Norm. Sup. (4)}, 36(2):271--317, 2003.

\bibitem{Serre}
D. Serre.
\newblock Systems of conservation laws 1.
\newblock {\em Cambridge university Press, Cambridge}, 1999.

\bibitem{Tzavaras_theory}
A.~E. Tzavaras.
\newblock Relative entropy in hyperbolic relaxation.
\newblock {\em Commun. Math. Sci.}, 3(2):119--132, 2005.

\bibitem{vass:handbook}
A.~Vasseur.
\newblock Recent results on hydrodynamic limits.
\newblock In {\em Handbook of differential equations: evolutionary equations.
  {V}ol. {IV}}, Handb. Differ. Equ., pages 323--376. Elsevier/North-Holland,
  Amsterdam, 2008.

\bibitem{Yau}
H.-T. Yau.
\newblock Relative entropy and hydrodynamics of {G}inzburg-{L}andau models.
\newblock {\em Lett. Math. Phys.}, 22(1):63--80, 1991.

\end{thebibliography}
 \def\cprime{$'$}

\end{document}